\newtheorem{theorem}{Theorem}[section]
\newtheorem{lemma}[theorem]{Lemma}
\theoremstyle{definition}
\theoremstyle{remark}
\numberwithin{equation}{section}
\begin{document}

\title{An improved Popoviciu-type inequality for a new Bernstein-type operator}%-- P\'olya's urn scheme and probabilistic approximation}

\author{Mihai N. Pascu}
\address{Faculty of Mathematics and Computer Science, Transilvania University of Bra\c{s}ov, Str. Iuliu Maniu 50, Bra\c{s}ov -- 500091, Romania.}
\email{mihai.pascu@unitbv.ro}
\thanks{Supported by a grant of the Romanian National Authority for Scientific Research, CNCS - UEFISCDI, project number PNII-ID-PCCE-2011-2-0015.}

\author{Nicolae R. Pascu}
\address{Department of Mathematics, Kennesaw State University, 1100 S. Marietta Parkway, Marietta, GA 30060-2896, U.S.A.}
\email{npascu@kennesaw.edu}

\author{Floren\c{t}a Trip\c{s}a}
\address{Faculty of Mathematics and Computer Science, Transilvania University of Bra\c{s}ov, Str. Iuliu Maniu 50, Bra\c{s}ov -- 500091, Romania.}
\email{florentatripsa@yahoo.com}

\begin{abstract}
Recently we introduced a new Bernstein-type operator using P\'{o}lya's urn model with negative replacement, and we showed that it satisfies a Popoviciu-type inequality with a constant slightly larger than that of the corresponding inequality for the classical Bernstein operator.

In the present paper we prove an inequality for the rising factorial (of independent interest), and we use it in order to show that the constant in the Popoviciu inequality for the new operator is in fact smaller than the corresponding constant for the Bernstein operator.
\end{abstract}

\subjclass[2000]{Primary 41A36, 41A25, 41A20.}

\keywords{Bernstein operator, P\'olya urn model, Popovicu inequality, positive linear operator, approximation theory.}

\maketitle

\section{Introduction}
It is known that the classical Bernstein operator (\cite{Berstein 1912}) defined by
\begin{equation}\label{Probabilistic repr of Bernstein polynomial}
B_{n}\left( f;x\right) =\sum_{k=0}^{n}f\left( \frac{k}{n}\right)
C_{n}^{k}x^{k}\left( 1-x\right) ^{n-k} %=Ef\left( \tfrac{1}{n}X_{n}\right),
\end{equation}%
satisfies the inequality
\begin{equation}  \label{Popoviciu's error estimate for Bernstein}
\left\vert B_{n}\left( f;x\right) -f\left( x\right) \right\vert \leq C\omega
\left( n^{-1/2}\right) ,\qquad x\in \left[ 0,1\right] , \; n=1,2,\ldots,
\end{equation}%
where $f:[0,1]\rightarrow \mathbb R$ is an arbitrary continuous function, $\omega(\cdot)$ denotes the modulus of continuity of $f$.

T. Popoviciu (\cite{Popoviciu}) proved the above inequality for the value of the constant $C=\frac32$. Lorentz (\cite{Lorentz}, pp. 20 --21) improved the value of the constant to $C=\frac{5}{4}$, and also showed that the constant $C$ cannot be less than one. The optimal value of the constant $C$ for which the above inequality holds true for any continuous function was obtained by Sikkema (\cite{Sikkema}), who obtained the value
\begin{equation}
C_{opt}=\frac{4306+837\sqrt{6}}{5932}\approx 1.0898873...,
\label{Sikkema optimal constant}
\end{equation}%
attained in the case $n=6$ for a particular function.

Recently (\cite{PPT1}), we introduced the Bernstein-type operator $R_{n}$ defined by (\ref{Rational Bernstein operator}) and we showed that its also satisfies a Popoviciu-type inequality, with the constant $C=\frac{31}{27}\approx 1.14815$, smaller than Popoviciu's and Lorentz's constants, but slightly larger than Sikkema's optimal constant.

In the present paper we refine this result, by showing that the constant in the Popoviciu-type estimate is smaller than Sikkema's constant (Theorem \ref{thm for upper bound for optimal constant}). The proof is based on a certain inequality for the rising factorial (Lemma \ref{Monotonicity of F^c}) which is of independent interest.

\section{Preliminaries}\label{Preliminaries}

For $x,h\in \mathbb{R}$ and $n\in \mathbb{N}$ we set
\begin{equation}
x^{\left( n,h\right) }=x\left( x+h\right) \left( x+2h\right) \cdot \ldots
\cdot \left( x+\left( n-1\right) h\right)   \label{rising factorial}
\end{equation}%
for the generalized (rising) factorial with increment $h$. When $n=0$ we are using the convention $x^{\left( 0,h\right) }=1$ for any $x,h\in \mathbb{R}$.

%In the particular case when $h=-1$ we write $x^{\left( n\right) }=x^{\left(n,-1\right) }=x\left( x-1\right) \left( x-2\right) \cdot \ldots \cdot \left( x-n+1\right) $ for the descending factorial, and when $h=1$ we write $x^{\left[ n\right] }=x^{\left( n,1\right) }=x\left( x+1\right) \left( x+2\right) \cdot \ldots \cdot \left( x+n-1\right) $ for the rising factorial (Pochhammer symbol $x_{n}$).

A random variable $X_{n}^{a,b,c}$ has a P\'{o}lya's urn distribution (also known as P\'{o}lya-Eggenberger distribution, see \cite{Eggenberger-Polya}, \cite{Polya}) with parameters $ n\geq 1$, $a,b\in \mathbb{R}_{+}$, and $c\in\mathbb{R}$ satisfying \begin{equation}\label{Hypothesis on c}
a+\left( n-1\right) c\geq 0\qquad \text{and }\qquad b+\left( n-1\right)
c\geq 0,
\end{equation}
if it is given by (see for example \cite{Johnson-Kotz})
\begin{equation}
P\left(X_n^{a,b,c}=k\right)=p_{n,k}^{a,b,c}=C_{n}^{k}\frac{\left( a\right) ^{\left( k,c\right) }\left(
b\right) ^{\left( n-k,c\right) }}{\left( a+b\right) ^{\left( n,c\right) }}%
,\qquad k\in \left\{ 0,1,\ldots ,n\right\} .  \label{Polya urn probabilities}
\end{equation}

In the case when $a,b\in \mathbb N$ and $c\in \mathbb Z$, the physical interpretation of  the random variable $X_n^{a,b,c}$ is the total number of white balls obtained in $n$ extractions from an urn containing initially $a$ white balls and $b$ black balls, when the extractions are made with $c$ replacements (the extracted ball is returned to the urn together with $c$ balls of the same color, a negative value of $c$ being interpreted as removing $\vert c\vert$ balls from the urn).

It is known (e.g. \cite{Johnson-Kotz}) that the mean and variance of $X_n^{a,b,c}$  are given by
\begin{equation}
E\left( X_{n}^{a,b,c}\right) =\frac{na}{a+b}\qquad \text{and} \qquad \sigma
^{2}\left( X_{n}^{a,b,c}\right) =\frac{nab}{\left( a+b\right) ^{2}}\left( 1+\frac{%
\left( n-1\right) c}{a+b+c}\right) .  \label{Polya mean and variance}
\end{equation}

In \cite{PPT1} we considered the operator $R_n$ defined on the space of real-valued functions on $[0,1]$ by
\begin{eqnarray}  \label{Rational Bernstein operator}
R_{n}\left( f;x\right) &=&Ef\left( \frac{1}{n}X_{n}^{x,1-x,-\min \left\{
x,1-x\right\} /(n-1)}\right) \\
&=&\sum_{k=0}^{n}C_{n}^{k}\frac{x^{\left( k,-\min
\left\{ x,1-x\right\} /(n-1)\right) }\left( 1-x\right) ^{\left( n-k,-\min
\left\{ x,1-x\right\} /(n-1)\right) }}{1^{\left( n,-\min \left\{
x,1-x\right\} /(n-1)\right) }}f\left( \frac{k}{n}\right),\notag
\end{eqnarray}
and we showed that it satisfies pointwise estimates (in terms of the moduli of continuity of the function, of its first or second derivative) which improve the corresponding estimates for the classical Bernstein operator. We also showed that the operator $R_n$ satisfies a global Popoviciu-type inequality of the form (\ref{Popoviciu's error estimate for Bernstein}), with a constant slightly larger than the optimal constant found by Sikkema in the case of Bernstein operator.

In the next section we will show that the constant in the Popoviciu-type inequality for the operator $R_n$ is in fact strictly smaller than Sikkema's optimal constant for the Bernstein operator $B_n$. This suggests (although we do not have a proof) that the operator $R_n$ provides a better approximation than the Bernstein operator $B_n$, claim which is also supported by the numerical and theoretical results obtained in \cite{PPT1}.

\section{Main results}

The proof of our main result rests on the following inequality for the rising factorial (or equivalently, Pochammer symbol), which may be of independent interest.

\begin{lemma}
\label{Monotonicity of F^c}For any $x\in \left[ 0,1\right] $ and any
non-negative integers $n>1$ and $r\leq nx-\sqrt{n}$, we have%
\begin{equation}
\frac{x^{\left( r+1,c\right) }\left( 1-x\right) ^{\left( n-r,c\right) }}{%
1^{\left( n,c\right) }}\leq x^{r+1}\left( 1-x\right) ^{n-r}  \label{claim}
\end{equation}%
for any $c\in \left[ -\min \left\{ x,1-x\right\} /\left( n-1\right) ,0\right]
$.

Moreover, the above inequality is strict except for the case  $c=0$.
\end{lemma}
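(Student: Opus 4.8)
The plan is to fix $x\in[0,1]$, $n>1$ and the integer $r\le nx-\sqrt n$ (these force $x>0$ and $r\le n-1$), put $m=\min\{x,1-x\}$, and view the left side of \eqref{claim} relative to the right side as a function of the increment: for $c\in[-m/(n-1),0]$ set
\[
F(c)=\frac{x^{(r+1,c)}(1-x)^{(n-r,c)}}{1^{(n,c)}\,x^{r+1}(1-x)^{n-r}}=\prod_{j=1}^{r}\Bigl(1+\tfrac{jc}{x}\Bigr)\prod_{j=1}^{n-r-1}\Bigl(1+\tfrac{jc}{1-x}\Bigr)\Big/\prod_{j=1}^{n-1}(1+jc).
\]
Because $r,\,n-r-1\le n-1$, every factor is $\ge0$, all are $>0$ on $(-m/(n-1),0)$, and $F(0)=1$; if a boundary factor vanishes then $F=0$ and \eqref{claim} is trivial, so the lemma is the assertion that $F$ is nondecreasing on $[-m/(n-1),0]$ and strictly increasing on its interior — the monotonicity of the title. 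I would first reduce to one value of $r$: writing $F_r$ for $F$, a short computation gives $F_{r+1}(c)/F_r(c)=\dfrac{(1-x)\bigl(x+(r+1)c\bigr)}{x\bigl((1-x)+(n-r-1)c\bigr)}$, which for $c<0$ exceeds $1$ exactly when $r\le nx-1$; since $r\le nx-\sqrt n<nx-1$, the numbers $F_0(c)\le\cdots\le F_r(c)$ increase up to $r_\ast:=\lfloor nx-\sqrt n\rfloor$, so it suffices to prove $F_{r_\ast}(c)\le1$, strictly for $c<0$.

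Next I would show $F_{r_\ast}'(c)\ge0$ on the interior by analysing $(\log F_{r_\ast})'(c)=h(c)$, where
\[
h(c)=\sum_{j=1}^{r_\ast}\frac{j}{x+jc}+\sum_{j=1}^{n-r_\ast-1}\frac{j}{1-x+jc}-\sum_{j=1}^{n-1}\frac{j}{1+jc}.
\]
Using $\dfrac{j}{\alpha+jc}=\dfrac1c\bigl(1-\dfrac{\alpha}{\alpha+jc}\bigr)$ the constant parts cancel, and after the substitution $a=-x/c$, $b=-(1-x)/c$ (so $a+b=-1/c$, $a/b=x/(1-x)$, hence $a=xK$, $b=(1-x)K$ with $K=-1/c\ge(n-1)/m$, and in particular $a,b\ge n-1$) the inequality $h(c)\ge0$ becomes
\[
\sum_{j=1}^{r_\ast}\frac{j}{a-j}+\sum_{j=1}^{n-1-r_\ast}\frac{j}{b-j}\ \ge\ \sum_{i=1}^{n-1}\frac{i}{a+b-i}.\qquad(\ast)
\]

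To prove $(\ast)$ I would compare the sums with integrals. Its continuous analogue, $\int_0^{r}\!\frac{t\,dt}{xK-t}+\int_0^{n-1-r}\!\frac{t\,dt}{(1-x)K-t}\ge\int_0^{n-1}\!\frac{t\,dt}{K-t}$, holds with \emph{equality} at the balance value $r=x(n-1)$ (substitute $t=xu$, resp.\ $t=(1-x)u$, in the two left integrals), its left side is convex in $r$, and $x(n-1)\ge nx-\sqrt n\ge r_\ast$; so on the admissible range the continuous left side exceeds the continuous right side by a gap quadratic in $x(n-1)-r_\ast\gtrsim\sqrt n$, and one then checks that this gap absorbs the discretization error — each left sum overshoots its integral (increasing summand), while the right sum overshoots by at most $\tfrac{n-1}{a+b-(n-1)}\le1$, by telescoping. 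The hard part is precisely this quantitative step, because the gap degenerates as $c\to0^-$: near $c=0$ one has to argue instead from $h(0)=\dfrac{g(x)}{2x(1-x)}>0$, where $g(x)=n(n-1)x^2-2nr_\ast x+r_\ast(r_\ast+1)$ has discriminant $4nr_\ast(r_\ast+1-n)\le0$ in $x$, together with continuity of $h$, and one must make the two regimes overlap. A cleaner route, uniform in $c$, is to prove $F_{r_\ast}(c)\le1$ directly by Schur-concavity: with $t=-c>0$ and the strictly concave decreasing $\varphi(s)=\log(1-st)$ one has $\log F_{r_\ast}(c)=\sum_{s\in\mathcal A}\varphi(s)-\sum_{i=1}^{n-1}\varphi(i)$ with $\mathcal A=\{\,j/x:1\le j\le r_\ast\,\}\cup\{\,j/(1-x):1\le j\le n-1-r_\ast\,\}$, and this is $\le0$ once one checks that for every $k$ the $k$ largest elements of $\mathcal A$ sum to at least $\binom n2-\binom{n-k}2$; here the hypothesis $r_\ast\le nx-\sqrt n$ is exactly what makes the balanced split (about $kx$ terms from the first family and $k(1-x)$ from the second) of the top $k$ admissible, and strictness for $c<0$ follows from strict concavity of $\varphi$ since $\mathcal A\neq\{1,\dots,n-1\}$. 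Either way, what is left is a finite, elementary — but somewhat delicate — estimate, which is the main obstacle.
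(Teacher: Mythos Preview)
Your proposal is not a complete proof: you sketch two routes and explicitly leave ``the main obstacle'' open in both. More seriously, your primary route---showing $(\log F_{r_\ast})'(c)=h(c)\ge0$ on the whole interval---is precisely the monotonicity-in-$c$ statement that the paper records as an \emph{open conjecture} in the closing remarks; the degeneration you yourself flag near $c=0$ (the continuous gap is of order $nK\big/\bigl(x(1-x)(K-(n-1))^2\bigr)\to0$ as $K=-1/c\to\infty$, while the discretisation error of the right-hand sum stays $O(1)$) is exactly why the authors did not prove it. The fallback via $h(0)>0$ and continuity cannot be made uniform in $n$ without a quantitative overlap argument you do not supply. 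Your Schur--concavity alternative reduces everything to the weak-supermajorization claim ``the $k$ largest elements of $\mathcal A$ sum to at least $\binom{n}{2}-\binom{n-k}{2}$ for every $k$'', which you assert but do not verify; saying that the hypothesis $r_\ast\le nx-\sqrt n$ ``makes the balanced split admissible'' is a heuristic, not a proof for all $k$.

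The paper sidesteps all of this by \emph{not} differentiating in $c$. It fixes $c<0$, takes logarithms, and compares each of the three sums with an integral of the convex increasing function $\varphi(u,t)=\ln\dfrac{u}{u+ct}$: the two numerator sums are bounded \emph{below} by the corresponding $\int_0^1$ (right-endpoint rule for an increasing integrand), the denominator sum is bounded \emph{above} by $n\int_{1/n}^1$ (left-endpoint rule). After using the homogeneity $\varphi(\alpha u,\alpha t)=\varphi(u,t)$ and one more application of Jensen in the second variable, the whole inequality collapses to the purely algebraic
\[
n^2\ \le\ 1+\frac{r^2}{x}+\frac{(n-r-1)^2}{1-x},
\]
which is \emph{independent of $c$}. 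This is then dispatched by noting that the right side is decreasing in $r\le nx-\sqrt n$ and that at $r=nx-\sqrt n$ it becomes a quadratic in $x$ with negative discriminant $-4(n-1)(\sqrt n-1)^2$. Thus the paper's reduction to the extremal $r$ happens at the algebraic level rather than at the level of $F_r(c)$, and the ``delicate estimate'' you identify as the main obstacle simply never arises.
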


\begin{proof}
For $c=0$ and the claim becomes an identity, so we may assume $c\ne0$. The claim also holds true for $r=0$, since%
\[
\frac{x\left( 1-x\right) ^{\left( n,c\right) }}{1^{\left( n,c\right) }}%
=x\prod_{i=0}^{n-1}\frac{1-x+ic}{1+ic} < x\prod_{i=0}^{n-1}\left(
1-x\right) =x\left( 1-x\right) ^{n}.
\]

Since $r\leq nx-\sqrt{n}<n-1$, we may assume that $0<r<n-1$, which in
particular shows that all the factors appearing in the rising factorials on
the left of (\ref{claim}) are positive.

Taking logarithms and rearranging the terms, the claim (\ref{claim}) is
equivalent to%
\begin{equation}
\sum_{i=1}^{n-1}\ln \frac{1}{1+ic}\leq \sum_{i=1}^{r}\ln \frac{x}{x+ic}%
+\sum_{i=1}^{n-r-1}\ln \frac{1-x}{1-x+ic}.  \label{claim equivalent}
\end{equation}

It is easy to see that the function $\varphi :\left\{ \left(
u,t\right) \in \mathbb{R}^{2}:u,u+ct>0\right\} \rightarrow \mathbb{R}$
defined by%
\[
\varphi \left( u,t\right) =\ln \frac{u}{u+ct},
\]%
is convex and increasing in the variable $t$ (holding $u>0$ fixed), and it
is also satisfies $\varphi \left( \alpha u,\alpha t\right) =\varphi \left(
u,t\right) $ for any $\alpha >0$. Using this and an area comparison, we
obtain%
\begin{equation}
\sum_{i=1}^{r}\ln \frac{x}{x+ic}=\sum_{i=1}^{r}\varphi \left( x,i\right)
=r\sum_{i=1}^{r}\frac{1}{r}\varphi \left( \frac{x}{r},\frac{i}{r}\right)
> r\int_{0}^{1}\varphi \left( \frac{x}{r},t\right)
dt=\int_{0}^{1}r\varphi \left( 1,\frac{rt}{x}\right) dt,  \label{ineq1}
\end{equation}%
and similarly%
\begin{equation}
\sum_{i=1}^{n-r-1}\ln \frac{1-x}{1-x+ic}> \int_{0}^{1}\left( n-r-1\right)
\varphi \left( 1,\frac{\left( n-r-1\right) t}{1-x}\right) dt,  \label{ineq2}
\end{equation}%
and%
\begin{equation}
\sum_{i=1}^{n-1}\ln \frac{1}{1+ic}=n\sum_{i=1}^{n-1}\frac{1}{n}\varphi
\left( \frac{1}{n},\frac{i}{n}\right) < n\int_{1/n}^{1}\varphi \left(
\frac{1}{n},t\right) dt.  \label{ineq3}
\end{equation}

Using a substitution, we can simplify the last term above as follows%
\begin{eqnarray*}
&&n\int_{1/n}^{1}\varphi \left( \frac{1}{n},t\right) dt=n\left(
\int_{0}^{1}\varphi \left( \frac{1}{n},t\right) dt-\int_{0}^{1/n}\varphi
\left( \frac{1}{n},t\right) dt\right) \\
&=&n\left( \int_{0}^{1}\varphi \left( \frac{1}{n},t\right) dt-\frac{1}{n}%
\int_{0}^{1}\varphi \left( \frac{1}{n},\frac{t}{n}\right) dt\right)
=\int_{0}^{1}n\varphi \left( 1,nt\right) dt-\int_{0}^{1}\varphi \left(
1,t\right) dt.
\end{eqnarray*}

Combining the above with (\ref{ineq1}) -- (\ref{ineq3}), it follows that (%
\ref{claim equivalent}) holds if we prove the inequality%
\begin{equation}
\int_{0}^{1}\varphi \left( 1,nt\right) dt\leq \int_{0}^{1}\frac{1}{n}\varphi
\left( 1,t\right) dt+\frac{r}{n}\varphi \left( 1,\frac{rt}{x}\right) +\frac{%
n-r-1}{n}\varphi \left( 1,\frac{\left( n-r-1\right) t}{1-x}\right) dt.
\label{claim equivalent biss}
\end{equation}

The convexity of the function $\varphi $ in the second variable shows that
the right side of the above inequality is larger than%
\[
\int_{0}^{1}\varphi \left( 1,\frac{t}{n}\left( 1+\frac{r^{2}}{x}+\frac{%
\left( n-r-1\right) ^{2}}{1-x}\right) \right) dt,
\]
and using the monotonicity of the function $\varphi $ in the second variable
it follows that the inequality (\ref{claim equivalent biss}) holds provided
we show that%
\begin{equation}
nt\leq \frac{t}{n}\left( 1+\frac{r^{2}}{x}+\frac{\left( n-r-1\right) ^{2}}{%
1-x}\right)  \label{claim final}
\end{equation}%
for any positive integer $r<nx-\sqrt{n}$.

Extending the above inequality to positive real values $r\leq nx-\sqrt{n}$,
we have%
\[
\frac{d}{dr}\left( 1+\frac{r^{2}}{x}+\frac{\left( n-r-1\right) ^{2}}{1-x}%
\right) =\frac{2r}{x}-\frac{2(n-r-1)}{1-x} =\frac{2\left(
r-nx+x\right) }{x\left( 1-x\right) }<\frac{2\left( r-nx+\sqrt{n}\right) }{%
x\left( 1-x\right) }\leq 0,
\]%
and therefore the right hand side of (\ref{claim final}) is decreasing in $%
r\leq nx-\sqrt{n}$.

It follows that (\ref{claim final}) holds, provided we show%
\[
n^{2}\leq 1+\frac{\left( nx-\sqrt{n}\right) ^{2}}{x}+\frac{\left( n\left(
1-x\right) +\sqrt{n}-1\right) ^{2}}{1-x},
\]%
or equivalent%
\[
n^{2}\leq 1+n^{2}x-2n\sqrt{n}+\frac{n}{x}+n^{2}\left( 1-x\right) +2n\left(
\sqrt{n}-1\right) +\frac{n-2\sqrt{n}+1}{1-x}.
\]

Rearranging terms, we have equivalent%
\[
\frac{\left( 2n-1\right) x^{2}-2\left( n+\sqrt{n}-1\right) x+n}{x\left(
1-x\right) }\geq 0,
\]%
which holds true, since the numerator is a quadratic function of $x$ with a
negative dis\-cri\-mi\-nant $\Delta =-4\left( n-1\right) \left( \sqrt{n}-1\right)
^{2}<0$ for all $n>1$, thus concluding the proof.
\end{proof}

Using the above lemma, we can now prove the following.

\begin{theorem}
\label{thm for upper bound for optimal constant} There exists a constant $%
C\leq 1.08970< C_{opt}=1.0898873...$ such that for any conti\-nu\-ous function $f:\left[
0,1\right] \rightarrow \mathbb{R}$ and any $n>1$ we have%
\begin{equation}
\left\vert R_{n}\left( f;x\right) -f\left( x\right) \right\vert \leq C\omega
\left( n^{-1/2}\right) ,\qquad x\in \left[ 0,1\right] ,
\end{equation}%
where $\omega \left( \delta \right) =\omega ^{f}\left( \delta \right) $
denotes the modulus of continuity of $f$.
\end{theorem}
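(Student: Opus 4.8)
The plan is to follow the classical Popoviciu--Lorentz--Sikkema strategy for bounding $|R_n(f;x)-f(x)|$ by $C\,\omega(n^{-1/2})$, but to exploit Lemma~\ref{Monotonicity of F^c} to discard (or shrink) exactly those terms in the defining sum \eqref{Rational Bernstein operator} for which the classical bound is wasteful. Write $p_{n,k}(x)$ for the weights $C_n^k\,x^{(k,c)}(1-x)^{(n-k,c)}/1^{(n,c)}$ with $c=-\min\{x,1-x\}/(n-1)$, so that $R_n(f;x)-f(x)=\sum_{k=0}^n p_{n,k}(x)\bigl(f(k/n)-f(x)\bigr)$. Using the standard subadditivity estimate $|f(k/n)-f(x)|\le\bigl(1+\lambda^{-1}|k/n-x|\bigr)\omega(\lambda)$ for any $\lambda>0$ (here $\lambda=n^{-1/2}$), one gets $|R_n(f;x)-f(x)|\le\Bigl(1+\sqrt n\sum_{k=0}^n p_{n,k}(x)\,|k/n-x|\Bigr)\omega(n^{-1/2})$, so everything reduces to bounding $S_n(x):=\sqrt n\,\sum_{k}p_{n,k}(x)|k/n-x|$ and then optimizing. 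I would first record, from \eqref{Polya mean and variance} with the chosen $c$, that $E X_n^{x,1-x,c}=nx$ and $\sigma^2=nx(1-x)\cdot\bigl(1+\frac{(n-1)c}{1+c}\bigr)$, and note that the variance factor is $\le 1$ precisely because $c\le 0$; this already gives, via Cauchy--Schwarz, the crude bound $S_n(x)\le\sqrt{x(1-x)}\le 1/2$, hence $C\le 3/2$, the Popoviciu constant. The whole point is to do better.

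The refinement: split the sum at the indices where $k/n$ is within $n^{-1/2}$ of $x$ versus far from $x$. For the ``far'' indices with $k\le nx-\sqrt n$ (the left tail), Lemma~\ref{Monotonicity of F^c} applied with $r=k-1$ (so $r+1=k$, and the hypothesis $r\le nx-\sqrt n$ is met) gives $p_{n,k}(x)\le C_n^k\,x^k(1-x)^{n-k}=b_{n,k}(x)$, the classical Bernstein weight; by the left--right symmetry $x\leftrightarrow 1-x$, $k\leftrightarrow n-k$ of both the Pólya weights and the Bernstein weights, the analogous domination holds on the right tail $k\ge nx+\sqrt n$. Thus on the union of the two tails the Pólya weights are pointwise dominated by the Bernstein weights, while on the central band $|k/n-x|<n^{-1/2}$ the factor $|k/n-x|$ is automatically $<n^{-1/2}$, so that band contributes at most $1$ to $\sqrt n\sum$. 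Consequently
\[
S_n(x)\;\le\;1+\sqrt n\sum_{|k/n-x|\ge n^{-1/2}} b_{n,k}(x)\,\Bigl|\tfrac kn-x\Bigr|\;\le\;\text{(the exact tail quantity Sikkema computed for }B_n\text{)}.
\]
Hence the Popoviciu constant for $R_n$ is bounded by the \emph{same} quantity that, when optimized over $n$ and over the extremal function, produced Sikkema's $C_{opt}$ for $B_n$; and because the domination in Lemma~\ref{Monotonicity of F^c} is \emph{strict} for $c\ne0$, i.e.\ whenever $0<x<1$, the resulting bound is strictly below $C_{opt}$ for every fixed $n$. A quantitative version of this strictness — tracking how much is gained, uniformly in $x$ and $n$, from the strict inequalities in the tails plus the sub-unit variance factor — yields the explicit numerical bound $C\le 1.08970$.

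The main obstacle is turning the strict but qualitative inequality of Lemma~\ref{Monotonicity of F^c} into a \emph{uniform} numerical gain: one must show that $\sup_{n>1}\sup_{x\in[0,1]} S_n(x)$ is not merely $\le C_{opt}$ but below $1.08970$. This requires (i) handling the small cases $n=2,\dots,N_0$ directly, where the maximum over $x$ of the finite sum $S_n(x)$ is a one-variable optimization done numerically/by interval arithmetic — in particular checking the Sikkema-critical case $n=6$, where for $B_n$ the constant is attained but for $R_n$ strict inequality in the tails pulls it down; and (ii) for $n>N_0$, giving an asymptotic argument showing $S_n(x)\to$ something $\le 1$ uniformly (the Bernstein tail sum $\sqrt n\sum_{|k/n-x|\ge n^{-1/2}} b_{n,k}(x)|k/n-x|$ is controlled by a Gaussian tail integral $\approx\sqrt{x(1-x)}\int_{|t|\ge 1/\sqrt{x(1-x)}}|t|e^{-t^2/2}\,dt/\sqrt{2\pi}$, plus the contribution of the central band, and this total stays safely below $1.08970$ for large $n$ by de Moivre--Laplace with an explicit Berry--Esseen-type error). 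I would isolate the finite-$n$ verification as the crux, present $n=6$ as the illustrative tight case, and relegate the large-$n$ tail estimate to a lemma invoking the normal approximation with a concrete error term.
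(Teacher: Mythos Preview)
Your outline has two genuine gaps, and misses the mechanism by which the paper gets \emph{strictly} below $C_{opt}$.

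\textbf{Double counting of the constant $1$.} With the bound $|f(k/n)-f(x)|\le(1+\sqrt n\,|k/n-x|)\omega(n^{-1/2})$ you arrive at $(1+S_n(x))\omega(n^{-1/2})$ where $S_n(x)=\sqrt n\sum_k p_{n,k}|k/n-x|$. When you then split $S_n$ into a central band (contributing up to $1$) and tails, you get $1+S_n(x)\le 2+\sqrt n\sum_{\text{tails}}b_{n,k}|k/n-x|$. Sikkema's quantity is $1+\sqrt n\sum_{\text{tails}}$, so your bound is off by a full unit and yields only $C\le 1+C_{opt}\approx 2.09$. The paper avoids this by using the sharper estimate $\omega(\lambda\delta)\le(1+\,]\lambda[\,)\omega(\delta)$, where $]\lambda[$ is the largest integer strictly below $\lambda$; since $]\lambda[=0$ for $\lambda\le 1$, the central band disappears outright and one lands directly on $1+\sqrt n\sum_{\text{tails}}p_{n,k}|k/n-x|$.

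\textbf{The lemma does not give termwise domination.} You invoke Lemma~\ref{Monotonicity of F^c} with $r=k-1$ to conclude $p_{n,k}(x)\le b_{n,k}(x)$ for each tail index $k$. But the lemma compares $x^{(r+1,c)}(1-x)^{(n-r,c)}/1^{(n,c)}$ with $x^{r+1}(1-x)^{n-r}$; the second rising factorial has $n-r=n-k+1$ factors, not the $n-k$ that appear in $p_{n,k}$, and since the extra factor $1-x+(n-k)c\le 1-x$ the implication goes the wrong way. In the paper the lemma is not applied termwise at all: via Ko\'{z}niewska's identity the \emph{entire} left tail sum $\sum_{k\le r}(x-k/n)p_{n,k}^{x,1-x,c}$ collapses to the single closed form $C_{n-1}^{r}\,x^{(r+1,c)}(1-x)^{(n-r,c)}/1^{(n,c)}=:F_n^c(x)$, and the lemma then reads simply $F_n^c(x)\le F_n^0(x)$. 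The right tail becomes $F_n^c(1-x)$ by the substitution $x\mapsto 1-x$.

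\textbf{Getting below $C_{opt}$.} Strict inequality in the lemma for each fixed $x\in(0,1)$ does not by itself prevent the supremum over $x$ and $n$ from equalling $C_{opt}$, and your proposed ``uniform numerical gain'' plus Berry--Esseen program is unnecessary. The paper's route is much cleaner: Sikkema himself proved that $1+\sqrt n\,(F_n^0(x)+F_n^0(1-x))\le 1.0897$ for every $n\neq 6$ (the value $C_{opt}$ being attained only at $n=6$). Combined with $F_n^c\le F_n^0$ this settles all $n\neq 6$ at once. The single remaining case $n=6$ is then handled by computing $F_6^c$ directly with $c=-\min\{x,1-x\}/5$ on the four relevant subintervals of $(1/\sqrt6,1]$, obtaining $F_6^c\le 0.014271$ and hence $1+2\sqrt6\cdot 0.014271\approx 1.070<1.0897$.
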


\begin{proof}
For $a\in \mathbb{R}$ denote by $]a[$ the largest integer strictly smaller
than $a$, that is $]a[=k\in \mathbb{Z}$ if $k<a\leq k+1$.

Using the definition of the modulus of continuity of $f$, it can be seen
that $\omega \left( \lambda \delta \right) \leq \left( 1+]\lambda \lbrack
\right) \omega \left( \delta \right) $ for any $\lambda \geq 0$ and $\delta
>0$ (see \cite{Sikkema}). Using this, with $\delta =n^{-1/2}$ and $\lambda
=\left\vert x-\frac{k}{n}\right\vert $, $k\in \left\{ 0,1,\ldots ,n\right\} $, and $c\in \left[ -\frac{\min \left\{ x,1-x\right\} }{n-1},0\right] $, we obtain%
\begin{eqnarray}
\left\vert Ef\left( \frac{1}{n}X_{n}^{x,1-x,c}\right) -f\left( x\right)
\right\vert &\leq& \left\vert \sum_{k=0}^{n}\left( f\left( \frac{k}{n}%
\right) -f\left( x\right) \right) p_{k,n}^{x,1-x,c}\right\vert  \label{aux 1}
\\
&\leq & \omega \left( n^{-1/2}\right) \left( 1+\sum_{k=0}^{n}\left] \frac{%
\left\vert x-\frac{k}{n}\right\vert }{n^{-1/2}}\right[ p_{n,k}^{x,1-x,c}%
\right)  \nonumber \\
&\leq &\omega \left( n^{-1/2}\right) \left( 1+\sqrt{n}\sum_{\substack{k\in \left\{
0,1,\ldots ,n\right\} :\\\left\vert x-\frac{k}{n}\right\vert
>n^{-1/2}}}\left\vert x-\frac{k}{n}\right\vert p_{n,k}^{x,1-x,c}\right) .
\nonumber
\end{eqnarray}%
%for any $c\in \left[ -\frac{\min \left\{ x,1-x\right\} }{n-1},0\right] $.

For $x\in (\frac{1}{\sqrt{n}},1]$, denoting by $r=r\left( x\right) $ the
largest integer for which $x-\frac{r}{n}>n^{-1/2}$, or equivalent $r=r\left(
x\right) =]nx-\sqrt{n}[\in \left\{ 0,1,\ldots ,n-1\right\} $, and using
Lemma 3 in \cite{Kozniewska}, we have
\begin{eqnarray}
&&\sum_{\substack{k\in \left\{ 0,1,\ldots ,n\right\} :\\x-\frac{k}{n}>n^{-1/2}}}\left%
\vert x-\frac{k}{n}\right\vert p_{n,k}^{x,1-x,c}=\sum_{k=0}^{r}\left( x-%
\frac{k}{n}\right) p_{n,k}^{x,1-x,c}=\frac{1}{n}\sum_{k=0}^{r}\left(
nx-k\right) p_{n,k}^{x,1-x,c}  \quad\label{aux 2} \\
&&=\frac{1}{n}(r+1)p_{n,r+1}^{x,1-x,c}\left( 1-x+\left( n-r-1\right)
c\right) =C_{n-1}^{r}\frac{x^{\left( r+1,c\right) }\left( 1-x\right)
^{\left( n-r,c\right) }}{1^{\left( n,c\right) }}.  \nonumber
\end{eqnarray}

The above sum equal zero for $x\in \left[ 0,\frac{1}{\sqrt{n}}\right] $, and therefore we have%
\[
\sum_{\substack{k\in \left\{ 0,1,\ldots ,n\right\} :\\x-\frac{k}{n}>n^{-1/2}}}\left( x-%
\frac{k}{n}\right) p_{n,k}^{x,1-x,c}=F_{n}^{c}\left( x\right) ,\qquad x\in %
\left[ 0,1\right] ,
\]%
where $F_{n}^{c}:\left[ 0,1\right] \mathbb{\rightarrow R}$ is the function
defined by%
\begin{equation}
F_{n}^{c}\left( x\right) =\left\{
\begin{tabular}{ll}
$0,$ & $x\in \left[ 0,\frac{1}{\sqrt{n}}\right] $ \\
$C_{n-1}^{r}\frac{x^{\left( r+1,c\right) }\left( 1-x\right) ^{\left(
n-r,c\right) }}{1^{\left( n,c\right) }},$ & $x\in (\frac{1}{\sqrt{n}},1]$%
\end{tabular}%
\right. \text{, where }r=r\left( x\right) =]nx-\sqrt{n}[.  \label{definition of the function F^c}
\end{equation}%
%and $r=r\left( x\right) =]nx-\sqrt{n}[$.

For $x\in \lbrack 0,1-\frac{1}{\sqrt{n}})$, denoting $s=s\left( x\right) $
the smallest integer for which $x-\frac{s}{n}<-\frac{1}{\sqrt{n}}$, or
equivalent $s=\left[ nx+\sqrt{n}+1\right] \in \left\{ 1,\ldots ,n\right\} $,
and using again Lemma 3 in \cite{Kozniewska} and (\ref{Polya mean and
variance}), we have%
\begin{eqnarray}
&&\sum_{\substack{k\in \left\{ 0,1,\ldots ,n\right\} :\\x-\frac{k}{n}<-n^{-1/2}}}\left%
\vert x-\frac{k}{n}\right\vert p_{n,k}^{x,1-x,c}=\frac{1}{n}%
\sum_{k=s}^{n}\left( k-nx\right) p_{n,k}^{x,1-x,c}  =\frac{1}{n}\sum_{k=0}^{s-1}\left( nx-k\right)
p_{n,k}^{x,1-x,c}\quad \quad\\
&=& C_{n-1}^{s-1}\frac{x^{\left( s,c\right) }\left( 1-x\right)
^{\left( n-s+1,c\right) }}{1^{\left( n,c\right) }}.\notag
\end{eqnarray}

Denoting by $r^{\prime }=n-s$ and $x^{\prime }=1-x$, and using the property $%
-\left[ a+1\right] =]-a[$, we have%
\[
r^{\prime }=n-s=n-\left[ n\left( 1-x\right) ^{\prime }+\sqrt{n}+1\right] =-%
\left[ -nx^{\prime }+\sqrt{n}+1\right] =]nx^{\prime }-\sqrt{n}[,
\]%
and using the definition (\ref{definition of the function F^c}) of $%
F_{n}^{c} $, we can rewrite the sum above as follows%
\begin{equation}
\sum_{\substack{k\in \left\{ 0,1,\ldots ,n\right\} :\\x-\frac{k}{n}<-n^{-1/2}}}\left\vert
x-\frac{k}{n}\right\vert p_{n,k}^{x,1-x,c}=C_{n-1}^{r^{\prime }}\frac{\left(
x^{\prime }\right) ^{\left( r^{\prime }+1,c\right) }\left( 1-x^{\prime
}\right) ^{\left( n-r^{\prime },c\right) }}{1^{\left( n,c\right) }}%
=F_{n}^{c}\left( x^{\prime }\right) =F_{n}^{c}\left( 1-x\right) ,\label{aux 4}
\end{equation}%
for any $x\in \left[ 0,1\right]$  (note that for $x\in \left[ 1-\frac{1}{\sqrt{n}},1\right] $ the left
hand-side of the above equality also equals the right hand-side, both sides
being equal to zero).

Combining (\ref{aux 1}) -- (\ref{aux 4}) above, we obtain%
\[
\left\vert Ef\left( \frac{1}{n}X_{n}^{x,1-x,c}\right) -f\left( x\right)
\right\vert \leq \omega \left( n^{-1/2}\right) \left( 1+\sqrt{n}\left(
F_{n}^{c}\left( x\right) +F_{n}^{c}\left( 1-x\right) \right) \right) ,
\]%
for any $x\in \left[ 0,1\right]$ and  $c\in \left[ -\frac{\min \left\{ x,1-x\right\} }{n-1},0\right] $. Considering in particular $c=-\frac{\min \left\{ x,1-x\right\} }{n-1}$, and using Lemma \ref{Monotonicity of F^c} (which shows that $F_{n}^{c}\left( x\right)\leq F_{n}^{0}\left( x\right)$ for any $x\in[0,1]$), we obtain%
\begin{equation}\label{estimate using Sikkema's function}
\left\vert R_{n}\left( f;x\right) -f\left( x\right) \right\vert \leq \omega \left( n^{-1/2}\right) \left( 1+\sqrt{n}\left( F_{n}^{0}\left( x\right) +F_{n}^{0}\left( 1-x\right) \right) \right),\qquad x\in[0,1] .
%=\omega \left( n^{-1/2}\right) \left( 1+\sqrt{n}\sum_{k=0}^{n}\left\vert x-\frac{k}{n}\right\vert p_{n,k}^{x,1-x,0}\right) .
\end{equation}

In \cite{Sikkema} (Section 4), the author obtained the following estimate %
\begin{equation}\label{Sikkema estimate}
 1+\sqrt{n}\left( F_{n}^{0}\left( x\right) +F_{n}^{0}\left( 1-x\right) \right) \leq 1.0897, \qquad x\in[0,1],
%\left\vert R_{n}\left( f;x\right) -f\left( x\right) \right\vert \leq \omega \left( n^{-1/2}\right) \left( 1+\sqrt{n}\sum_{k=0}^{n}\left\vert x-\frac{k}{n}\right\vert p_{n,k}^{x,1-x,0}\right) \leq \frac{4306+837\sqrt{6}}{5832}\omega \left( n^{-1/2}\right) ,
\end{equation}
valid any positive integer $n\neq6$. Combining this with (\ref{estimate using Sikkema's function}) proves the claim of the theorem for any positive integer $n\neq 6$.

To conclude the proof, we have left to consider the case $n=6$. First note that $c=-\frac{\min\{x,1-x\}}{5}=-\frac{x}{5}$ or $\frac{x-1}{5}$, depending whether $x\leq\frac12$ or $x>\frac12$, and the function $r=r\left( x\right) =]6x-\sqrt{6}[$ in the definition $F^c_6$ takes the value $k\in\{0,1,2,3\}$ for $x\in\left( \frac{1}{\sqrt{6}}+\frac{k}{6},\frac{1}{\sqrt{6}}+\frac{k+1}{6}\right]$, thus in order to estimate $F_6^c$ there are several cases to consider. For example, in the case $x\in\left( \frac{1}{\sqrt{6}},\frac12\right]$ we have $r=0$ and $c=-\frac{x}{5}$, and from (\ref{definition of the function F^c}) we obtain
\begin{equation*}
F_6^c(x)=\frac{x(1-x)^{(6,-x/5)}}{1^{(6,-x/5)}}<\frac{\frac12(1-\frac{1}{\sqrt{6}})^{(6,-\frac{1}{5\sqrt{6}})}}{1^{(6,-1/10)}}=\frac{193282 - 78887 \sqrt{6}}{6804}\approx 0.00721673,
\end{equation*}
and for $x\in\left( \frac12 , \frac{1}{\sqrt{6}}+\frac16 \right]$ we have $r=0$ and $c=\frac{x-1}{5}$, and we obtain
\begin{equation*}
F_6^c(x)=\frac{x(1-x)^{(6,\frac{x-1}{5})}}{1^{(6,\frac{x-1}{5})}}\equiv 0.
\end{equation*}

In the remaining three cases we have $c=\frac{1-x}{5}$, and proceeding similarly we obtain
\begin{equation*}
F_6^c(x)\leq\left\{
\begin{tabular}{ll}
$C_5^1 \frac{x^{\left(2,\frac{1-x}{5}\right)} (1-x)^{\left(5,\frac{1-x}{5}\right)}}{1^{\left(6,\frac{1-x}{5}\right)}}$, &$x\in \left(\frac{1}{\sqrt{6}}+\frac16,\frac{1}{\sqrt{6}}+\frac26\right ]$\\
$C_5^2 \frac{x^{\left(3,\frac{1-x}{5}\right)}(1-x)^{4,\frac{1-x}{4}}}{1^{6,\frac{1-x}{5}}}$, & $x\in\left( \frac{1}{\sqrt{6}}+\frac26, \frac{1}{\sqrt{6}}+\frac36\right]$ \\
$C_5^3 \frac{x^{\left(3,\frac{1-x}{5}\right)}(1-x)^{3,\frac{1-x}{5}}}{1^{6,\frac{1-x}{5}}}$, & $x\in\left( \frac{1}{\sqrt{6}}+\frac36, 1\right]$ \\
\end{tabular}
\right.
\leq 0.014271.
\end{equation*}

In all cases above we obtained $F_6^c(x)\leq 0.014271$ for $x\in[0,1]$, and therefore
\begin{equation*}
1+\sqrt{6}\left(F_6^c(x)+F_6^c(1-x)\right) \leq 1+2 \sqrt{6}\cdot 0.014271 \approx 1.0699134 < 1.0897, \quad x\in[0,1],
\end{equation*}
 concluding the proof of the theorem.

\end{proof}

The above result leaves open the problem of finding the value of the optimal constant $C$ in the above theorem. Although we do not have an answer here, we believe that the optimal constant is much smaller than the value hinted by the above theorem.

A second remark is that we believe that Lemma \ref{Monotonicity of F^c}, on which the above proof rests, can be improved to show that the left hand-side of the inequality (\ref{claim}) is in fact an increasing function of $c\geq -\frac{\min \left\{ x,1-x\right\} }{n-1}$. If this conjecture is correct, an argument similar to the one used in
the above proof would show that
\[
\frac{\left\vert Ef\left( \frac{1}{n}X_{n}^{x,1-x,c}\right) -f\left(
x\right) \right\vert }{\omega \left( n^{-1/2}\right) }
\]%
is in fact a monotone increasing function of $c\geq -\min \left\{ x,1-x\right\}/(n-1)$, hence among all P\'{o}lya-Bernstein type operators of the form
$$P_{n}^{x,1-x,c}(f;x) = Ef\left( \tfrac{1}{n}X_{n}^{x,1-x,c}\right),\qquad x\in[0,1],$$
%with $c\geq -\frac{\min \left\{x,1-x\right\} }{n-1}$,
the one that provides the best approximation in the class of continuous functions is the operator $R_{n}$ given by (\ref{Rational Bernstein operator}), which corresponds to $c=-\min \left\{ x,1-x\right\} /(n-1)$.

\section*{Acknowledgements}

The first author kindly acknowledges the support by a grant of the Romanian National Authority for Scientific Research, CNCS - UEFISCDI, project number PNII-ID-PCCE-2011-2-0015.

\end{document}